\let\mib=\boldsymbol
\let\cal=\mathcal
\def\N{{\bf N}}
\def\R{{\bf R}} 
\def\Zd{{\bf Z}^d}
\def\ph{\varphi} 
\def\vD{{\sf D}} 
\def\vu{{\sf u}}
\def\mj{{\bf j}} 
\def\mk{{\bf k}} 
\def\ml{{\bf l}} 
\def\mm{{\bf m}} 
\def\mx{{\bf x}} 
\def\mA{{\bf A}} 
\def\mV{{\bf V}} 
\def\mW{{\bf W}} 
\def\mmu{{\mib \mu}}
\def\mxi{{\mib \xi}} 
\def\mpsi{{\mib \psi}}
\def\eps{\varepsilon} 
\def\lA{{\cal A}}
\def\lL{{\cal L}}
\def\lM{{\cal M}}
\def\aess#1{\;(\hbox{\rm a.e.~} #1)}
\def\Cbc#1{{{\rm C}^{\infty}_{c}(#1)}} 
\def\Cc#1{{{\rm C}_c(#1)}} 
\def\CCp#1{{{\rm C}^{#1}}}
\def\Cj#1{{{\rm C}^1(#1)}} 
\def\Cnl#1{{{\rm C}_0(#1)}} 
\def\Cp#1{{{\rm C}(#1)}} 
\def\Hmj#1{{{\rm H}^{-1}(#1)}} 
\def\Hjnl#1{{{\rm H}^{1}_0(#1)}} 
\def\dv{{\sf div\thinspace}}
\def\dscon{\relbar\joinrel\rightharpoonup}
\def\Dup#1#2{\langle#1,#2\rangle}
\def\Dupp#1#2{\Bigl\langle#1,#2\Bigr\rangle}
\def\Lb#1{{{\rm L}^\infty(#1)}} 
\def\Lbl#1{{{\rm L}^{\infty}_{{\rm loc}}(#1)}} 
\def\Ld#1{{{\rm L}^{2}(#1)}}
\def\Ldl#1{{{\rm L}^{2}_{{\rm loc}}(#1)}}
\def\LLb{{{\rm L}^{\infty}}}
\def\LLd{{{\rm L}^2}}
\def\LL#1{{{\rm L}^{#1}}}
\def\Lp#1{{{\rm L}^{p}(#1)}}
\def\mnul{{\bf 0}} 
\def\oi#1#2{\langle#1,#2\rangle}
\def\ozi#1#2{\langle#1,#2]}
\def\pC#1#2{{{\rm C}^{#1}(#2)}} 
\def\pL#1#2{{{\rm L}^{#1}(#2)}}
\def\pLl#1#2{{{\rm L}^{#1}_{{\rm loc}}(#2)}}
\def\Rd{{{\bf R}^{d}}}
\def\Sdmj{{\rm S}^{d-1}}
\def\tr{{\sf tr}} 
\def\vnul{{\sf 0}}
\def\zoi#1#2{[#1,#2\rangle}
\def\Dscon{\relbar\joinrel\dscon} 
\def\DDscon{\relbar\joinrel\Dscon} 
\def\povrhsk#1{\smash{
        \mathop{\;\Dscon\;}\limits^{#1}}}
\def\povrhdsk#1{\smash{
        \mathop{\DDscon}\limits^{#1}}}
\theoremstyle{plain}
\newtheorem{theorem}{Theorem}
\newtheorem{lemma}[theorem]{Lemma}
\theoremstyle{definition}
\theoremstyle{remark}
\newtheorem{remark}[theorem]{Remark}
\def\endmark{\hskip 2em\begin{picture}(8,10)
\put(0,0){$\Box$} \put(2,0){\rule{1.9mm}{0.3mm}}
\put(6.5,0){\rule{0.3mm}{1.9mm}}
\end{picture}
\par}
\begin{document}
\title[Exploring Limit Behaviour of Non-quadratic Terms via H-measures]{Exploring Limit Behaviour of Non-quadratic Terms via H-measures. Application to Small Amplitude Homogenisation. }
\author{ Martin Lazar}
\address{ Martin Lazar, University of Dubrovnik, Department of Electrical Engineering and Computing, \' Cira Cari\'ca 4, 20000  Dubrovnik, Croatia}
\email{  martin.lazar@unidu.hr}
 \date{\today}

\begin{abstract}
A method is developed for analysing asymptotic behaviour  of terms involving  an arbitrary integer order powers of $\LL p$ functions by means of H-measures. 
It is applied to the small amplitude homogenisation problem for a stationary diffusion equation, in which coefficients are assumed to be analytic perturbations of a constant, enabling formul\ae\ for higher order correction terms in a general, non-periodic setting. Explicit expressions in terms of Fourier coefficients are obtained under periodicity assump\-tion. The method enables its generalisation and application to the corresponding non-stationary equation, as well as to some other small amplitude homogenisation problems.
\end{abstract}

\subjclass[2010]{35B27, 35J15, 35S05}

\keywords{small amplitude homogenisation, H-measures, non-quadratic terms}

\maketitle

\section{Introduction and motivation}

H-measures, as originally introduced a quarter of century ago by L. Tartar \cite{Tar} and (independently) P. G\'erard \cite{Ger} are kind of a microlocal defect tool, measuring deflection of weak from strong $\LLd$ convergence. They explore   quadratic limit behaviour of  bounded $\LLd$ sequences.

A prominent feature of H-measures is  their capability to keep  track of an equation satisfied by functions  generating them. More precisely, if an H-measure is associated to solutions of an equation $Pu_n=0$ (accompanied by a series of initial/boundary conditions), one can take advantage of their basic properties: the localisation and the propagation one. The former constrains the support of the H-measure within the characteristic set of the (pseudo) differential operator $P$, while the latter states that the measure (as well as concentration and oscillation effects) propagates along bicharacteristics of $P$. 

Since their introduction, they have been successfully applied in many mathematical fields - let us here  mention generalisation of compensated compactness results to equations with variable coefficients \cite{Ger,Tar}, applications in the control theory \cite{ DLR, LZ}, the velocity averaging results  \cite{Ger, LMdpde}, as well as 
explicit formul\ae\ and bounds in homogenisation  \cite{Tar, ALjmaa, ALrwa}.  

Original H-measures are restricted to the $\LLd$ framework. This constraint has partially been overcome with the introduction of H-distributions \cite{AM} - a generalisation of the concept to the $\LL p, p \geq 1$ framework.  More precisely, a new tool is constructed with the aim of exploring products of a form $\int u_n v_n$, with a sequence $(u_n)$ being bounded in $\LL p$, while $v_n$ are taken from the corresponding dual $\LL {p'}$

However, the above generalisation is still bound to a study of quadratic terms (with possibly different factors), and does not handle higher order terms, such as cubic ones. The aim of this paper is to investigate possibilities of handling a general $\LL p, p \geq 2$ sequences and to describe, roughly speaking,  a limit behaviour of $\int |u_n|^p $. Here it should be mentioned that actual products we deal with are more complicated, allowing each factor to be accompanied by a pseudodifferential operator acting on it. 

The analysis below is based on a  relatively simple idea:  for a  sequence of functions $u_n$ converging weakly to zero in  $\LL p, p \geq 2$,  the sequence $(u_n^{p/2})$ is bounded in $\LLd$, and determines (up to a subsequence) an H-measure denoted by $\mu_{p/2}$. 
Thus the limit of $\int |u_n|^p=\int u_n^{p/2} \overline{ u_n^{p/2}} $ can be expressed by the measure $\mu_{p/2}$, which in some cases can be calculated explicitly. Of course, one has to be aware that in the absence of strong convergence, the weak limit of $u_n^{p/2}$ in general differs from zero, which requires correction terms entering the expression. 

The obtained results are applied to  a small amplitude homogenisation problem. The main idea of a small amplitude approximation consists of taking a (formal) expansion of a solution to a problem under consideration with respect to a small parameter representing perturbation of the coefficient. Originally introduced by L. Tartar in \cite{Tar} for a stationary diffusion problem, the approach has subsequently been elaborated and applied to more general homogenisation \cite{ALjmaa, AV}, optimal design  \cite{AG, AK} and inverse problems \cite{GM}. In all these papers H-measures are used as the main analytical tool, and the analysis is performed up to the second order expansion, within which the result is obtained by exploring limits of quadratic terms.
Handling of non-quadratic terms, appearing in higher order expansion, is, however, not achievable  by a standard usage of H-measures and requires a different approach. In this paper we try to make a step forward in that direction and to obtain expressions for higher order correction terms.

The paper is organised as follows. In the next section we describe in details the above presented idea, and demonstrate the method of expressing limit  of higher order terms via (original) H-measures associated to an appropriate combination of given $\LL p$ functions.  Application of the method is demonstrated in Section \ref{Application} on the example of small amplitude homogenisation for a stationary diffusion problem, with a particular intention given to a special, but important case of periodic functions.  The paper is closed by some concluding remarks, and by pointing toward some related and open problems. 

\section{Exploring non-quadratic terms through H-measures}
\label{theory}

\subsection{General setting}
The original H-measures explore quadratic limit behaviour of  $\LLd$ terms, and their existence theorem can be formulated as follows (cf. \cite{Ger, Tar}). 
\begin{theorem}
\label{existence}
 Let $(u_n)$ be a sequence 
converging weakly to zero in $\Ld\Rd $. Then,
after passing to a subsequence (not relabeled), there exists a
nonnegative Radon measure $\mu$ on the cospherical bundle $\Rd\times \Sdmj$  such that for every
$\ph_1, \ph_2 \in \Cnl\Rd $ and $\psi \in \Cp \Sdmj$, it holds
\begin{align}
\label{ex1}
\lim\limits_{n\to\infty}
\int\limits_{\R^{d}} {\cal A}_{\psi} (\ph_1 u_n )(\mx) \overline{(\ph_2 u_n)(\mx)} d\mx 
&= \Dup{\mu}{(\ph_1\overline{\ph_2}) \boxtimes\psi} \\
& = \int\limits_{\Rd\times\Sdmj} \ph_1(\mx)\overline{\ph_2}(\mx)\psi(\mxi)
		 \,d\mu(\mx,\mxi) \,. \nonumber
\end{align}
where ${\cal A}_{\psi}$ is the (Fourier) multiplier operator on
$\R^d$ associated to $\psi(\mxi/|\mxi|)$.

\end{theorem}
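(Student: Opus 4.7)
The plan is to construct the measure $\mu$ as the limit of a family of sesquilinear forms associated to the sequence, and then invoke a commutation argument to identify this limit as a positive Radon measure on the cospherical bundle.

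First I would introduce, for each $n$, the sesquilinear form
\[
L_n(\varphi_1,\varphi_2,\psi) \;:=\; \int_{\R^d} {\cal A}_{\psi}(\varphi_1 u_n)(\mx)\,\overline{(\varphi_2 u_n)(\mx)}\,d\mx,
\]
defined for $\varphi_1,\varphi_2 \in \Cnl\Rd$ and $\psi \in \Cp\Sdmj$. Since $\psi(\mxi/|\mxi|)$ is bounded, Plancherel's theorem guarantees that ${\cal A}_{\psi}$ is a bounded operator on $\Ld\Rd$ with norm $\|\psi\|_{\Lb\Sdmj}$. Together with $\|\varphi_i u_n\|_{\LLd} \leq \|\varphi_i\|_{\LLb}\|u_n\|_{\LLd}$ and the uniform $\Ld\Rd$ boundedness of $(u_n)$, this gives
\[
|L_n(\varphi_1,\varphi_2,\psi)| \;\leq\; C\,\|\varphi_1\|_{\Cnl{}}\,\|\varphi_2\|_{\Cnl{}}\,\|\psi\|_{\Cp{}}.
\]
By separability of $\Cnl\Rd$ and $\Cp\Sdmj$, a standard diagonal extraction yields a subsequence along which $L_n(\varphi_1,\varphi_2,\psi)$ converges for every triple $(\varphi_1,\varphi_2,\psi)$ in countable dense subsets, and by the uniform bound the limit extends to a continuous sesquilinear form $L$ on the whole space.

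Next I would observe that $L$ depends on $(\varphi_1,\varphi_2)$ only through the product $\varphi_1 \overline{\varphi_2}$. This is where I would invoke the \emph{first commutation lemma} of Tartar: for any $b \in \Cnl\Rd$, the commutator $[M_b,{\cal A}_\psi]$ (with $M_b$ the multiplication operator by $b$) is a compact operator on $\Ld\Rd$. Since $u_n \dscon 0$ weakly in $\Ld\Rd$, compact operators send $(u_n)$ to a strongly convergent sequence, so
\[
\int {\cal A}_{\psi}(\varphi_1 u_n)\,\overline{\varphi_2 u_n}\,d\mx - \int \varphi_1\overline{\varphi_2}\,{\cal A}_{\psi}(u_n)\,\overline{u_n}\,d\mx \;\longrightarrow\; 0,
\]
which exhibits $L$ as a bilinear form in $\varphi_1\overline{\varphi_2}$ and $\psi$ separately, hence a linear functional on (a dense subspace of) $\Cnl{\Rd\times\Sdmj}$.

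The main obstacle, and the reason the first commutation lemma is really needed, is positivity: I have to show that $L(|\varphi|^2,\psi) \geq 0$ whenever $\psi \geq 0$. For such $\psi$, write $\psi = \eta^2$ with $\eta \in \Cp\Sdmj$ real-valued; then, modulo a commutator that vanishes in the limit, I rewrite
\[
\int {\cal A}_{\eta^2}(\varphi u_n)\overline{\varphi u_n}\,d\mx \;\approx\; \int |{\cal A}_{\eta}(\varphi u_n)|^2\,d\mx \;\geq\; 0,
\]
using self-adjointness of ${\cal A}_\eta$ (symbol real and even extended to $\Rd\setminus\{0\}$; the odd part can be absorbed into a commutator) and the identity ${\cal A}_\eta {\cal A}_\eta = {\cal A}_{\eta^2}$ up to a compact operator. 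Taking $n\to\infty$ yields $L(|\varphi|^2,\psi)\geq 0$. By polarisation and density of tensor products $(\varphi_1\overline{\varphi_2})\boxtimes\psi$ in $\Cnl{\Rd\times\Sdmj}$, the Riesz representation theorem delivers a nonnegative Radon measure $\mu$ on $\Rd\times\Sdmj$ satisfying \eqref{ex1}, completing the proof.
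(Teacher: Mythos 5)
Your proposal is correct and follows essentially the same route as the paper's (sketched) proof: uniform bounds plus diagonal extraction to define the limiting form, the First commutation lemma to reduce the dependence to the product $\ph_1\overline{\ph_2}$, positivity via $\psi=\eta^2$ and the self-adjointness of ${\cal A}_\eta$ for real $\eta$ (note ${\cal A}_\eta{\cal A}_\eta={\cal A}_{\eta^2}$ exactly, and evenness of $\eta$ is not needed --- only realness), and finally an upgrade of the positive bilinear functional to a Radon measure. The only cosmetic difference is in that last step, where the paper invokes the Schwartz kernel theorem together with the Schwartz lemma on nonnegative distributions, while you go through density of tensor products and the Riesz representation theorem; both are standard, though in your version you should make explicit the positivity-derived estimate $|L(\phi,\psi)|\leq\|\psi\|_{\infty}L(|\phi|,1)$ that gives sup-norm continuity on $\Cnl{\Rd\times\Sdmj}$.
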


The measure $\mu$  is called\/ {\rm the H-measure} associated to 
the (sub)sequence $(\vu_{n'})$. In the sequel we shall often abused terminology and notation  by assuming that we have already passed to  a subsequence determining an H-measure. 

{\bf Notation.}
Throughout the paper by $\Dup\cdot\cdot$ stands for a sesquilinear dual product, taken to be antilinear in the first, while linear in the second variable. By  $\boxtimes$ we denote the tensor product of functions in different variables. 
\hfill\endmark
\vskip 2mm

The theorem also generalises to the vector sequences $(\vu_n)$, resulting in a hermitian, positive semi-definite matrix H-measure, whose diagonal elements are (scalar) H-measures associated to a corresponding component $u_n^i$. 

The proof of the last theorem is based on the (First) commutation lemma \cite[Lemma 1.7]{Tar}, enabling exchange of the multiplier operator $A_\psi$ and the operator of multiplication by $\ph_i$ in \eqref{ex1} when passing to the limit. Thus the limit depends on the product $\ph_1\ph_2$ only and results in a bilinear functional on $\Cnl\Rd\otimes \Cp{\Sdmj}$. Taking into account its positivity, and by using the Schwartz kernel theorem, as well as the Schwartz lemma on nonnegative distributions, one shows that the functional is a Radon measure in both variables, $\mx$ and $\mxi$. 

A multiplier operator $A_\psi$ associated to a bounded symbol $\psi$ is a continuous operator on $\Ld\Rd$, which is easily demonstrated by means of the Fourier transform. Generalisation of that result to an $\LL p$ setting is provided by  the Marcinkiewicz multiplier theorem (e.g. \cite[Theorem 5.2.4]{Gra} showing that $A_\psi$  is a bounded operator on $\Lp \Rd$, for any $p\in\oi1\infty$ and $\psi$ of class $\CCp d$.  

We would like to generalise Theorem \ref{existence} by considering higher order expressions in $u_n\in\LL p, p\in \N$. More precisely we are interested in expressing the limit of
\begin{equation}
\label{p-product}
\int_{\R^{d}} {\cal A}_{\psi_1} (\ph_1 u_n )(\mx) {\cal A}_{\psi_2} (\ph_2 u_n )(\mx) \dots {\cal A}_{\psi_p} (\ph_p u_n )(\mx) d\mx ,
\end{equation}
where $\ph_i, \psi_i, i=1..p$ are appropriate test functions.
The following result holds.

\begin{theorem}
\label{general}
Let $(u_n)$ be a  sequence converging weakly to zero  in $\pLl {p+\eps}\Rd$ for some $ p \in \N$ and $\eps>0$. Then for any choice of test functions 
$\ph_i \in \Cc\Rd, \psi_i \in \pC d \Sdmj, i=1..p$ it holds
\begin{equation}
\label{result}
\lim_n \int_{\R^{d}} {\cal A}_{\psi_1} (\ph_1 u_n )(\mx)\cdot \ldots \cdot {\cal A}_{\psi_p} (\ph_p u_n )(\mx) d\mx 
= \Dup{\mu_{vw}}{\ph \boxtimes1} + \int (\ph v) (\mx)  \,\overline w (\mx) d\mx,
\end{equation}
where $\ph=\prod_{i=1}^p \ph_i$, while $\mu_{vw}$ is the off-diagonal component of the matrix H-measure associated to the sequence $(v_n-v, w_n-w)$ defined by \eqref{v} and \eqref{w} below. 
\end{theorem}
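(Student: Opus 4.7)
The plan is to reduce the $p$-fold product to a binary one, to which the matrix version of Theorem~\ref{existence} applies. I split the indices $\{1, \ldots, p\}$ into two groups $I, J$ of sizes $\lceil p/2 \rceil$ and $\lfloor p/2 \rfloor$, and set
\begin{equation*}
v_n := \prod_{i \in I} {\cal A}_{\psi_i}(u_n), \qquad w_n := \overline{\prod_{j \in J} {\cal A}_{\psi_j}(u_n)}.
\end{equation*}
The Marcinkiewicz multiplier theorem gives continuity of each ${\cal A}_{\psi_i}$ on $\Lp\Rd$ for $\psi_i$ of class $\pC d{\Sdmj}$, so by H\"older's inequality applied to the roughly $p/2$ factors, the sequences $(v_n), (w_n)$ are bounded in $\pLl{2+\delta}\Rd$ for some $\delta > 0$. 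This is precisely where the $\eps$-buffer in the hypothesis is used. Passing to subsequences, $v_n \dscon v$ and $w_n \dscon w$ in $\Ldl\Rd$, and, after a further extraction, the matrix H-measure of $(v_n - v, w_n - w)$ exists, with off-diagonal entry denoted by $\mu_{vw}$.

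The next step is to relate the original $p$-fold product to $\ph\, v_n \overline{w_n}$. Applying the (first) commutation lemma to pull each cutoff out of its multiplier yields ${\cal A}_{\psi_i}(\ph_i u_n) = \ph_i \, {\cal A}_{\psi_i}(u_n) + K_n^{(i)}$, where the commutator $K_n^{(i)}$ is compact and converges strongly to zero. Expanding the product in \eqref{p-product} and controlling each mixed term via H\"older against the uniform $\pLl{p+\eps}\Rd$ bound on the surviving factors, every term containing at least one $K_n^{(i)}$ vanishes in the limit, leaving
\begin{equation*}
\int_{\Rd} \prod_{i=1}^p {\cal A}_{\psi_i}(\ph_i u_n)(\mx) \, d\mx = \int_{\Rd} \ph(\mx) \, v_n(\mx) \overline{w_n}(\mx) \, d\mx + o(1).
\end{equation*}
Decomposing $v_n = v + (v_n - v)$ and $w_n = w + (w_n - w)$ in the right-hand side, the two cross terms vanish by weak convergence against $\ph v, \ph w \in \LLd$, the constant term produces $\int \ph v \overline{w}\, d\mx$, and the matrix H-measure identity derived from \eqref{ex1} with trivial frequency symbol (after inserting a smooth cutoff equal to $1$ on $\mathrm{supp}\,\ph$ to accommodate the compact-support requirement on both spatial test functions) gives
\begin{equation*}
\lim_n \int_{\Rd} \ph(\mx) (v_n - v)(\mx) \overline{(w_n - w)}(\mx) \, d\mx = \Dup{\mu_{vw}}{\ph \boxtimes 1},
\end{equation*}
which combined with the previous identity yields \eqref{result}.

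The main difficulty I anticipate is the bookkeeping for the commutator terms in the $\LL{p+\eps}$ setting: one needs an $\LL q$-version of the first commutation lemma compatible with the Marcinkiewicz multiplier bounds, together with precise H\"older exponent tracking for the mixed products of uniformly bounded and strongly-converging factors. A secondary issue is giving a clean, balanced definition of $v_n, w_n$ when $p$ is odd, where the two groups carry a different number of $u_n$-factors and the integrability splits become borderline; this is exactly the point where the strict inequality $p + \eps > p$ in the hypothesis is essential, as it provides the margin needed to push both $v_n$ and $w_n$ into $\LLd_{\mathrm{loc}}$ simultaneously.
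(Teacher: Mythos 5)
Your overall route is the same as the paper's: commute the cutoffs past the multipliers, regroup the $p$ factors into two halves $v_n,\overline{w_n}$, and read off the limit as $\Dup{\mu_{vw}}{\ph\boxtimes 1}$ plus the product of the weak limits (the cross terms vanishing by weak convergence). However, there are two genuine gaps. The first is minor but real: $u_n$ is only in $\pLl{p+\eps}\Rd$, so ${\cal A}_{\psi_i}(u_n)$ is not defined --- the multiplier needs a globally integrable argument. The paper handles this by writing $\ph_i u_n=\ph_i(\chi_i u_n)$ with $\chi_i$ the characteristic function of ${\rm supp}\,\ph_i$, so that the commutator acts on $\chi_i u_n\in\pL{p+\eps}\Rd$ and the residual factors are ${\cal A}_{\psi_i}(\chi_i u_n)$, which lie in $\pL q\Rd$ for every $q\in\ozi 1{p+\eps}$. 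Your $v_n,w_n$ must be built from these, not from ${\cal A}_{\psi_i}(u_n)$; note also that the statement explicitly defines $\mu_{vw}$ through \eqref{v}--\eqref{w}, which contain the $\chi_i$.

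The second gap is the substantive one: your unequal split into groups of sizes $\lceil p/2\rceil$ and $\lfloor p/2\rfloor$ fails for odd $p$ when $\eps$ is small. A product of $(p+1)/2$ factors, each bounded in $\pL q\Rd$ only for $q\le p+\eps$, lands (by H\"older) in $\LL{2(p+\eps)/(p+1)}$, and $2(p+\eps)/(p+1)\ge 2$ forces $\eps\ge 1$. So for $\eps<1$ your $v_n$ is not bounded in $\Ld\Rd$ (not even in $\Ldl\Rd$), the matrix H-measure of $(v_n-v,w_n-w)$ need not exist, and the final step collapses. The hypothesis $\eps>0$ does not supply the margin you invoke; it only buys compactness of the commutators and a $\delta>0$ of room in the \emph{balanced} split. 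The paper's device, which you are missing, is to split the middle factor itself: put $\bigl({\cal A}_{\psi_{(p+1)/2}}(\chi_{(p+1)/2}u_n)\bigr)^{1/2}$ (a fixed branch of the square root, which lies in $\LL{2(p+\eps)}$) into \emph{both} $v_n$ and $w_n$, so that each half carries exactly $p/2$ ``units'' and lands in $\LL{2+2\eps/p}\cap\LLd$. With that modification, and with the $\chi_i$ inserted as above, your argument matches the paper's proof.
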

\begin{proof}
By the theorem assumptions, the products $\ph_i u_n$ are bounded in $\pL {p+\eps}\Rd$ for all $i=1..p$. Additionally the Marcinkiewicz multiplier theorem  provides boundedness of ${\cal A}_{\psi_i} (\ph_i u_n )$ in the same space, as well. As by the (commutation ) Lemma \ref{commutation} the commutator $C_i = {\cal A}_{\psi_i} \ph_i - \ph_i{\cal A}_{\psi_i}$ is a compact operator on $\pL p \Rd$, we can consecutively
 exchange the order of operators in \eqref{result}, showing that the corresponding limit equals
\begin{equation}
\label{p-linear}
\lim_n \int_{\R^{d}} \ph {\cal A}_{\psi_1} (\chi_1 u_n )(\mx)\cdot \ldots \cdot {\cal A}_{\psi_p} (\chi_p u_n )(\mx) d\mx 
=\lim_n \int_{\R^{d}} \ph v_n \overline{ w_n} d\mx.
\end{equation}
where  the last two  factors are defined by
\begin{equation}
\label{v}
{\displaystyle}
v_n:= \left\{
\begin{matrix}
\prod\limits_{i=1}^{p/2} {\cal A}_{\psi_i}  (\chi_i u_n ), \quad p\in 2\N \\
\prod\limits_{i=1}^{(p-1)/2} {\cal A}_{\psi_i} (\chi_i u_n ) \Big({\cal A}_{\psi_{(p+1)/2}}(\chi_{(p+1)/2} u_n )\Big)^{1/2}, \quad {\rm elsewhere}
\end{matrix}
\right. 
\end{equation}
 and
\begin{equation}
\label{w}
{\displaystyle}
\overline{ w_n}:= \left\{
\begin{matrix}
 \prod\limits_{i=p/2+1}^{p} {\cal A}_{\psi_i} (\chi_i u_n ), \quad p\in 2\N \\
 \Big({\cal A}_{\psi_{(p+1)/2}} (\chi_{(p+1)/2} u_n )\Big)^{1/2} \prod\limits_{i=(p+3)/2}^{p} {\cal A}_{\psi_i} (\chi_i u_n ), \quad {\rm elsewhere.}
\end{matrix}
\right. 
\end{equation}
In the above $\chi_i$ stands for the characteristic function of the support of $\ph_i$. The conjugation sign originate from the scalar product used in the definition of H-measures \eqref{ex1}. We also assume that one branch of the square root has been selected (the choice does not effect the result). 

Sequences $(v_n)$ and $(w_n)$ are bounded in $\Ld \Rd$, and (after possible passing to a subsequence) we denote their weak $\LLd$ limits by $v$ and $w$, respectively. Note that in the absence of the strong (zero) convergence of $(u_n)$, the limits $v$ and $w$ in general differ from zero. 

Rewriting the last term in \eqref{p-linear} as 
$$
\lim_n \int_{\R^{d}} \ph (v_n-v) \overline{( w_n-w)} d\mx + \int_{\R^{d}} \ph v \overline w d\mx,
$$
and expressing the last limit via H-measure $\mu_{vw}$ one obtains the result. 
\end{proof}

The generalisation of the First commutation lemma to the non-$\LLd$ framework can be found in \cite{AM}, whose (slightly corrected)  version we reproduce here.
\begin{lemma}
\label{commutation}
Let $(u_n)$ be a bounded sequence of  functions  in $\Ld\Rd\cap \pL p\Rd$ for some $p\in\ozi 2\infty$, converging weakly to zero (in the sense of distributions). Denote by $C={\cal A}_{\psi} \ph - \ph{\cal A}_{\psi}$ the commutator determined by $\ph \in \Cnl\Rd$ and $\psi \in \pC d \Sdmj$.  Then the sequence $\left(Cu_n\right)$ converges to zero strongly in $\pL q\Rd$ for any $q\in\zoi 2p$. 
\end{lemma}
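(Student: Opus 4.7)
The plan is to reduce the statement to the classical $\LLd$ First Commutation Lemma and then upgrade the conclusion to $\LL q$ via interpolation.

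First I would invoke the original Tartar--G\'erard commutation lemma \cite[Lemma~1.7]{Tar}: since $\ph\in\Cnl\Rd$ and $\psi\in\pC d\Sdmj\subset\Cp\Sdmj$, the commutator $C=\cal A_\psi\ph-\ph\cal A_\psi$ is a compact operator on $\Ld\Rd$. Combining the $\LLd$-boundedness of $(u_n)$ with the weak convergence to zero in $\cal D'(\Rd)$ (and the density of $\Cbc\Rd$ in $\Ld\Rd$) one gets $u_n\dscon 0$ weakly in $\Ld\Rd$, so that $Cu_n\to 0$ strongly in $\Ld\Rd$.

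Second I would show that $(Cu_n)$ stays bounded in $\pL r\Rd$ for every $r\in(1,p)$ (and even up to $r=p$ when $p<\infty$). By the Marcinkiewicz multiplier theorem \cite[Theorem~5.2.4]{Gra}, the regularity $\psi\in\pC d\Sdmj$ implies that $\cal A_\psi$ is bounded on $\pL r\Rd$ for every $r\in(1,\infty)$. Since $\ph\in\Cnl\Rd\subset\LLb$ acts as a bounded multiplication operator on every $\pL r\Rd$, the commutator $C$ is bounded on $\pL r\Rd$ in the same range. When $p<\infty$ the hypothesis gives $(u_n)$ bounded in $\pL p\Rd$, hence $(Cu_n)$ bounded in $\pL p\Rd$; when $p=\infty$, interpolating $u_n$ between $\LLd$ and $\LLb$ shows it is bounded in every $\pL r\Rd$, and so is $Cu_n$.

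Finally, for $q\in\oi 2 p$ I would pick an auxiliary exponent $r$ with $q<r<p$ (choosing $r<\infty$ in the case $p=\infty$) and apply H\"older's inequality in its log-convex form
$$\|Cu_n\|_{\pL q\Rd}\leq\|Cu_n\|_{\Ld\Rd}^{\theta}\,\|Cu_n\|_{\pL r\Rd}^{1-\theta},$$
where $\theta\in\oi 0 1$ is determined by $\frac 1q=\frac{\theta}{2}+\frac{1-\theta}{r}$. The first factor tends to zero by the first step, while the second is bounded by the second step, so $\|Cu_n\|_{\pL q\Rd}\to 0$. For $q=2$ the conclusion is contained directly in the first step.

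The only delicate point is the endpoint $p=\infty$, where the Marcinkiewicz theorem fails to give $\LLb$-boundedness of $\cal A_\psi$; this is precisely why the statement excludes $q=p$, as the strict inequality $q<p$ lets one interpolate through a finite intermediate exponent $r$, avoiding the endpoint altogether.
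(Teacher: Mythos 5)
Your proposal is correct and follows exactly the route the paper indicates for this lemma: the First commutation lemma of Tartar for compactness (hence strong $\Ld\Rd$ convergence of $Cu_n$), the Marcinkiewicz multiplier theorem for boundedness of $C$ on $\pL r\Rd$, and the interpolation inequality to pass to $\pL q\Rd$ for $2\leq q<p$. The paper only sketches these three ingredients in one sentence, and your write-up is a faithful and complete elaboration of that same argument.
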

The proof of the lemma is based on the interpolation inequality, the First commutation lemma (\cite[Lemma 1.7]{Tar}, providing compactness of the commutator $C$ on $\Ld\Rd$), and the Marcinkiewicz multiplier theorem (providing boundedness of the multiplier $\lA_\psi$ on $\pL q\Rd $ for any $q\in \oi 1\infty$ and $\psi$ smooth enough).

\begin{remark}
\begin{itemize}
\item
The additional regularity  assumptions on $(u_n)$ (requiring it to belong to $\pL {p+\eps}\Rd$ for some positive $\eps$) and $\psi$ (assuming it to be of class $\CCp d$) are necessary for the application of the (commutation) Lemma \ref{commutation}. It enables the limit in \eqref{result} to depend on the product $\ph$ of test functions $\ph_i$ only. 
\item Note that by relation \eqref{p-linear} it follows that the limit in \eqref{result} defines a continuous $(p+1)$-linear form $B$ on $\Cnl\Rd\times   \pC d \Sdmj \times \ldots \times \pC d \Sdmj$. In the special case of $p=2$ one can use Plancherel theorem in order to get a bilinear form depending on products $\ph=\prod_i \ph_i$ and  $\psi=\prod_i \psi_i$ only, resulting in a distribution, and eventually an (H-)measure on $\Rd\times \Sdmj$.  Usually this tool is not at our disposal,  and we are left with a more general object. However, relation \eqref{result} shows that the object is a measure with respect to $\mx$ variable. More precisely, for any choice of test functions $\psi_i, i=1..p$, the functional  $B(\cdot, \psi_1, ... ,\psi_p)$ equals the measure $\mu_{vw}  + v \overline w \lambda(\mx)$, with the last term standing for the Lebesgue measure. 
\item The last theorem can be adapted in order to compute limits of more generalised expressions of the form (for simplicity of the notation we restrict to the even $p$):
\begin{align}
\label{gen-product}
 \int_{\R^{d}} &{\cal A}_\psi\Big({\cal A}_{\psi_1} (\ph_1 u_n )\cdot \ldots \cdot {\cal A}_{\psi_{p/2}} (\ph_{p/2} u_n ) \Big) (\mx) \\
&\cdot {\cal A}_{\psi_{p/2+1}} (\ph_{p/2+1} u_n )\cdot \ldots \cdot {\cal A}_{\psi_p} (\ph_{p} u_n )(\mx) d\mx \,.\nonumber
\end{align}
The difference compared to \eqref{result} is in an additional multiplier (with symbol $\psi$) acting on half of the factors.

Carefully repeating steps of the proof of Theorem \ref{general} one obtains
that the limit of \eqref{gen-product} equals
$$
\Dup{\mu_{vw}}{\ph \boxtimes\psi} + \int {\cal A}_\psi\big(\ph_1\cdot \ldots \cdot\ph_{p/2} \ v\big)(\mx) \, \overline{\ph_{p/2+1}\cdot \ldots \cdot\ph_{p} w } (\mx) d\mx\,
$$
the result coinciding with \eqref{result} for $\psi=1$. 

\end{itemize}
\end{remark}

\subsection{Localisation principle}
\label{local}
As already stated in the introduction, one of the most important properties of  H-measures is the so called localisation property (e.g. \cite[Corollary 2.2]{Ger}),  playing a crucial role in most of successful  applications of the tool. Thus it would be important to check if the same principle applies when analysing limit behaviour of higher order terms. Here we substantiate a positive answer for an $\LLb$ sequence, but it holds for more general situations as well. 

Suppose $u_n\in \Lb\Rd$ are solutions to an equation $P u_n=0$, converging vaguely to zero. By the localisation principle, the associated H-measure is supported within  the characteristic set of the  operator $P$. But the same constraint also applies for off-diagonal terms of a matrix H-measure associated to a vector sequence $(u_n, w_n)$, with $(w_n)$ being an arbitrary bounded $\LLd$ sequence. 

For that reason, we can split the integrand in \eqref{p-product} into two parts, the first one coinciding with the first factor, while the second one containing the rest of them, i.e. $w_n= \prod\limits_{i=2}^{p} {\cal A}_{\psi_i} (\ph_i u_n )$.  Taking into account Remark 4 (the last item), we get the limit of \eqref{p-product}  equals $\Dup{\mu_{uw}}{\ph\boxtimes\psi_1}$, where $\mu_{uw}$ is an off-diagonal term of an H-measure associated to  $(u_n, w_n-w)$. In accordance to above, it satisfies the same constraints as an H-measure associated to $(u_n)$, i.e. its support 
is localised within the characteristic set of $P$.

\subsection{Explicit formul\ae\ in the periodic setting}

Let $(u_n)$ be a bounded sequence of periodic functions in $\Lbl\Rd$ defined by
\begin{equation}
\label{periodic}
u_n(\mx)= \sum_{\mk\in\Zd} \hat u_{\mk} \,e^{2\pi i n \mk \cdot \mx}.
\end{equation}
Hereby we assume the mean value  $\hat u_\vnul$ equals zero, thus obtaining the zero weak convergence of $(u_n)$. The associated H-measure is well known, and is combination of the Lebesgue measure (in $\mx$) and the Dirac mass (in $\mxi$):
\begin{equation}
\label{H-periodic}
\mu(\mx, \mxi)=\sum_\mk |\hat u_\mk|^2  \delta_{\mk\over|\mk| }(\mxi)\, \lambda(\mx) \,.
\end{equation}
By using the above described procedure, we would like to express the limit of \eqref{p-product} in the case $p=4$. 
First, let us note that for a periodic function $u_n$, the action of the multiplier ${\cal A}_{\psi}$ results in a periodic function again:
$$
{\cal A}_{\psi} u_n (\mx)= \sum_\mk \hat u_\mk  \,\psi({\mk }) e^{2\pi i n \mk \cdot \mx},
$$
where we have used that the symbol of the above multiplier is a homogeneous function of order zero. 
Specially, function $v_n$ defined by \eqref{v} in this case takes the form
$$
v_n (\mx)= \big({\cal A}_{\psi_1} u_n \big)\big( {\cal A}_{\psi_2} u_n \big) (\mx)
=\sum_{\mj, \mk} \hat u_\mj  \,\hat u_\mk   \,\psi_1({\mj }) \psi_2({\mk }) \,e^{2\pi i n (\mj+ \mk) \cdot \mx},
$$
and converges weakly to
$$
v(\mx)= \sum_{\mk} \hat u_\mk \, \hat u_{-\mk}   \,\psi_1({\mk }) \psi_2(-{\mk })\,.
$$
Similarly,
\begin{align*}
\overline{w_n}(\mx)
&=  \big({\cal A}_{\psi_3} u_n\big)  \big({\cal A}_{\psi_4} u_n \big)  (\mx)
=\sum_{\ml, \mm} \hat u_\ml \,\hat u_\mm  \,\psi_3({\ml }) \psi_4({\mm })\, e^{2\pi i n (\ml+ \mm) \cdot \mx}\\
& \dscon \overline w(\mx)= \sum_{\ml} \hat u_\ml \,\hat u_{-\ml}  \,\psi_3({\ml }) \psi_4(-{\ml })\,.
\end{align*}
The measure $\mu_{vw}$ determined by the sequences $(v_n-v)$ and $(w_n-w)$ then reads
\begin{equation*}
\mu_{vw}=\sum_{\begin{matrix}  \scriptstyle \mj, \mk \\  \scriptstyle \mj+\mk\not=\{\vnul\}\end{matrix}} \!
\Big(\!\!\sum_{\begin{matrix}  \scriptstyle \ml, \mm \\  \scriptstyle \ml+\mm=-(\mj+\mk)\end{matrix}}
 \!\!\!\!\hat u_\mj \,\hat u_\mk \,\hat u_\ml \,\hat u_\mm \,
\psi_1({\mj }) \psi_2({\mk })
\psi_3({\ml }) \psi_4({\mm })\Big)\,
\delta_{\mj+\mk \over |\mj+\mk|}(\mxi) \,\lambda(\mx) \,.
\end{equation*}
Taking into account the form of the limits $v$ and $w$, relation \eqref{result} implies
\begin{align}
\label{result-periodic4}
\lim_n &\int_{\R^{d}} {\cal A}_{\psi_1} (\ph_1 u_n )(\mx)\cdot \ldots \cdot {\cal A}_{\psi_4} (\ph_4 u_n )(\mx) d\mx \\
&= \sum_{\begin{matrix}  \scriptstyle \mj, \mk,  \ml, \mm \\  
\scriptstyle \ml+\mm=-(\mj+\mk)\end{matrix}}
 \!\!\hat u_\mj \,\hat u_\mk \,\hat u_\ml \,\hat u_\mm \,
\psi_1({\mj }) \psi_2({\mk })
\psi_3({\ml }) \psi_4({\mm })\,
\int_{\R^{d}}\ph (\mx) d\mx,\nonumber
\end{align}
where, let it be repeated, $\ph=\prod_i \ph_i$. 

Similar approach can be applied for other values of $p$. Here we briefly comment the case  $p=3$, as a prototype for odd values of $p$. 

Note that due to additional regularity of sequence $(u_n)$ (bounded in $\Lbl\Rd$, and thus in $\pLl q\Rd$, for any $q$), we can avoid splitting of the middle factor in \eqref{p-product} into two terms involving the square root, and define 
$$
v_n={\cal A}_{\psi_1} u_n ,\quad 
\overline{ w_n}=  \big({\cal A}_{\psi_2} u_n\big)  \big({\cal A}_{\psi_3} u_n  \big) ,
$$
which both belong to $\Ldl\Rd$. Thus the associated measure is well defined and computing as above one gets
\begin{equation}
\label{H-cubic}
\mu_{vw}=\sum_{\begin{matrix}  \scriptstyle  \mk \not=\{\vnul\}\end{matrix}} \!
\Big(\!\!\sum_{\begin{matrix}  \scriptstyle \ml, \mm \\  \scriptstyle \mk+\ml+\mm=\vnul\end{matrix}}
 \!\!\!\!\,\hat u_\mk \,\hat u_\ml \,\hat u_\mm \,
\psi_1({\mk })
\psi_2({\ml }) \psi_3({\mm })\Big)\,
\delta_{\mk \over |\mk|}(\mxi) \,\lambda(\mx) \,.
\end{equation}
and
\begin{align}
\label{result-periodic3}
\lim_n &\int_{\R^{d}} {\cal A}_{\psi_1} (\ph_1 u_n )(\mx)\cdot \ldots \cdot {\cal A}_{\psi_3} (\ph_3 u_n )(\mx) d\mx \\
&= \sum_{\begin{matrix}  \scriptstyle \mk,  \ml, \mm \\  
\scriptstyle \mk+\ml+\mm=\vnul\end{matrix}}
 \!\!\hat u_\mk \,\hat u_\ml \,\hat u_\mm \,
\psi_1({\mk }) \psi_2({\ml })
\psi_3({\mm }) \,
\int_{\R^{d}}\ph (\mx) d\mx,\nonumber
\end{align}
Similarly, the last formula is now easily extend to an arbitrary integer $p\geq2$,  providing the following result. 

\begin{theorem}
\label{general-periodic}
Let $(u_n)$ be  a bounded sequence of periodic functions in $\Lbl\Rd$ defined by \eqref{periodic}, with the mean value zero. Then for any $ p \in \N$ and any choice of test functions 
$\ph_i \in \Cc\Rd, \psi_i \in \pC d \Sdmj, i=1..p$ it holds
\begin{equation}
\label{result-periodic}
\lim_n \int_{\R^{d}} {\cal A}_{\psi_1} (\ph_1 u_n )(\mx)\cdot \ldots \cdot {\cal A}_{\psi_p} (\ph_p u_n )(\mx) d\mx 
= \sum_{\begin{matrix}  \scriptstyle \mk_i\in\Zd, \\  
\scriptstyle \sum_i\mk_i=\vnul\end{matrix}}
\Big(\prod_{i=1}^p \hat u_{\mk_i} \psi_i({\mk_i })\Big)\int_{\R^{d}}\ph (\mx) d\mx,
\end{equation}
where $\ph=\prod_{i=1}^p \ph_i$.
\end{theorem}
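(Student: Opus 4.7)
The plan is to mirror the explicit computations already performed for $p=3$ and $p=4$, replacing the specific index sets by their obvious analogues for arbitrary $p$. Boundedness of $(u_n)$ in $\Lbl\Rd$ places the sequence in every $\pLl q\Rd$, so Theorem \ref{general} applies and the $p$-fold product may be split as $v_n\overline{w_n}$ without any square roots. Setting $m:=\lfloor p/2\rfloor$, I would take
$$v_n:=\prod_{i=1}^m \lA_{\psi_i}u_n, \qquad \overline{w_n}:=\prod_{i=m+1}^p \lA_{\psi_i}u_n,$$
both of which lie in $\Ldl\Rd$.

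Since the symbol of $\lA_{\psi_i}$ is homogeneous of degree zero, each $\lA_{\psi_i}u_n$ retains the Fourier expansion \eqref{periodic} with $\hat u_\mk$ replaced by $\hat u_\mk\,\psi_i(\mk/|\mk|)$. First I would read off the weak $\LLd$ limits $v$ and $\overline w$: only the plane waves whose total frequency vanishes survive, giving
$$v=\sum_{\mk_1+\cdots+\mk_m=\vnul}\prod_{i=1}^m \hat u_{\mk_i}\,\psi_i(\mk_i/|\mk_i|),$$
and analogously for $\overline w$. Next I would compute $\mu_{vw}$ via the same direct calculation used for $p=4$: the pair $(v_n-v,\,w_n-w)$ is given by the same Fourier sums with the zero-total-frequency tuples removed, and a term-by-term application of the periodic H-measure \eqref{H-periodic}, extended to products of plane waves through the multiplier action, yields $\mu_{vw}$ as a sum over tuples $(\mk_1,\ldots,\mk_p)$ subject to $\sum_i\mk_i=\vnul$ and $\mk_1+\cdots+\mk_m\ne\vnul$, supported in $\mxi$ on the Dirac mass at $(\mk_1+\cdots+\mk_m)/|\mk_1+\cdots+\mk_m|$ and with Lebesgue density in $\mx$.

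Finally I would substitute into \eqref{result}: pairing $\mu_{vw}$ with $\ph\boxtimes 1$ collapses the $\mxi$-dependence, while $\int \ph v\overline w\,d\mx$ contributes exactly the complementary piece with $\mk_1+\cdots+\mk_m=\vnul$ (which forces $\mk_{m+1}+\cdots+\mk_p=\vnul$). The two contributions together exhaust all tuples obeying the single global constraint $\sum_i\mk_i=\vnul$, producing \eqref{result-periodic}. The hard part, as in the worked cases, is purely combinatorial bookkeeping: the artificial split into two halves partitions the admissible frequencies in two complementary ways, and one must verify that $\Dup{\mu_{vw}}{\ph\boxtimes 1}$ plus $\int\ph v\overline w\,d\mx$ reassembles into the unconstrained sum in \eqref{result-periodic}. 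For odd $p$ the two halves differ in size by one, but the argument is otherwise identical.
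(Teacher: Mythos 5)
Your proposal is correct and follows essentially the same route as the paper: the paper computes the cases $p=4$ and $p=3$ explicitly (splitting the product into two halves, no square roots being needed thanks to the $\Lbl\Rd$ bound, reading off the weak limits $v,w$ and the off-diagonal measure $\mu_{vw}$ in terms of Fourier coefficients) and then asserts the straightforward extension to general $p$, which is exactly the bookkeeping you describe. The recombination of $\Dup{\mu_{vw}}{\ph\boxtimes 1}$ with $\int\ph v\overline w\,d\mx$ into the single constraint $\sum_i\mk_i=\vnul$ is precisely how the paper arrives at \eqref{result-periodic}.
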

\begin{remark}
The last theorem easily generalises to a case when each factor of the integrand in \eqref{result-periodic} is associated to a different  sequence $(u_n^i)_n, i=1..p$, just by adjusting the Fourier coefficients on the right hand side. 

Note that the last theorem also incorporates the expression \eqref{H-periodic} for an H-measure associated to a sequence of periodic functions. 
\end{remark}

\section{Computation of higher order correction terms in small amplitude homogenisation}
\label{Application}
\subsection{Small amplitude homogenisation}

We consider a sequence of elliptic problems:
\begin{equation}
\label{e-problems}
\left\{
\begin{aligned}	
	 - \dv(\mA^n\nabla u^n) & = f\in\Hmj\Omega	\\
			u^n   & \in \Hjnl\Omega\,,\cr
\end{aligned}	
\right.
\end{equation}
 where $\Omega\subseteq\Rd$ is an open,  bounded domain.

If the coefficients $\mA^n$ are taken from the set (with $0<\alpha<\beta$)
$$
\lM(\alpha,\beta;\Omega) :=\{\mA\in\Lb{\Omega; \lL(\Rd;\Rd)} : \mA(\mx)\mxi\cdot\mxi\geq\alpha|\mxi|^2
	\, \& \, \mA^{-1}(\mx)\mxi\cdot\mxi\geq{1\over\beta}|\mxi|^2 \aess{\mx\in \Omega} \}\;,
$$
we have that (after  extracting a non-relabelled subsequence) $\mA^n\povrhdsk H \mA^\infty\in\lM(\alpha,\beta;\Omega)$ (e.g. \cite[Theorem 6.5]{Thom}). The above H-convergence means that for any choice of 
$f\in\Hmj\Omega$, the sequence of solutions to \eqref{e-problems}  satisfies:
\begin{align*}
u^n	& \dscon u^\infty	 \qquad\hbox{in } \Hjnl\Omega \\
\mA^n \nabla u^n & \dscon \mA^\infty\nabla u^\infty \quad\hbox{in } \Ld \Omega \;, 
\end{align*}
where $u^\infty$ is the solution of \eqref{e-problems} with $\infty$ instead of $n$.

For small amplitude homogenisation we consider $\mA^n$ to be a perturbation of a constant:
\begin{equation}
\label{expansion} 
\mA^n_\gamma(t,\mx) = \mA_0 + \gamma \mA_1^n(t,\mx) + \gamma^2 \mA_2^n(t,\mx) + \gamma^3 \mA_3^n(t,\mx)+o(\gamma^3) \;,
\end{equation}
where $\mA_i^n\povrhsk\ast\mnul$ in $\Lb \Omega$ for any $i\geq1$.

Assuming that $\mA_0 \in \lM(\alpha,\beta;\Omega)$, we have that (for small values of $\gamma$)
\begin{equation}
\label{A_infinite}
\mA^n_\gamma \povrhdsk H \mA_\gamma^\infty = \mA_0 + \gamma \mA_1^\infty(t,\mx) + \gamma^2 \mA_2^\infty(t,\mx) + \gamma^3 \mA_3^\infty(t,\mx)+o(\gamma^3) \;,
\end{equation}
where the homogenised limit $\mA^\infty_\gamma$ is measurable in $\mx$ and analytic in $\gamma$ (for details on small amplitude homogenisation consult \cite[Chapter 29]{Thom}).

The components of the limit $\mA^\infty_\gamma$ up to the second order of $\gamma$ are given explicitly by means of  H-measures. More precisely, the following theorem holds (cf. \cite[Theorem 4.2]{Tar}).

\begin{theorem}
\label{A2-theorem}
The effective diffusion tensor $\mA^\infty_\gamma$ satisfies
\begin{equation*}
\mA_\gamma^\infty = \mA_0 +   \gamma^2 \mA_2^\infty(t,\mx) + o(\gamma^2),
\end{equation*}
where the second order correction is given by
$$
\int_\Omega \big(\mA_2^\infty\big)_{ij}( \mx) \phi ( \mx)  d\mx 
	= - \sum_{k,l} \Dupp{\mu_{11}^{iklj}}{\phi {\xi_k  \xi_l  
		\over \mA_0\mxi\cdot\mxi}},
$$
with $\mmu_{11}$ standing for an H-measure (with four indices) associated to $\mA_1^n$.
\end{theorem}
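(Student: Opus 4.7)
The proof follows Tartar's original approach via a formal asymptotic expansion of the solutions in $\gamma$. First, I insert the ansatz $u_\gamma^n = u_0 + \gamma u_1^n + \gamma^2 u_2^n + o(\gamma^2)$ into \eqref{e-problems} with $\mA^n_\gamma$ given by \eqref{expansion} and collect equal powers of $\gamma$. At order $\gamma^0$ one recovers $-\dv(\mA_0 \nabla u_0) = f$, independent of $n$, while at order $\gamma^1$ the corrector solves
\begin{equation*}
-\dv(\mA_0 \nabla u_1^n) = \dv(\mA_1^n \nabla u_0),\qquad u_1^n\in\Hjnl\Omega,
\end{equation*}
so that $u_1^n\dscon 0$, in view of $\mA_1^n \povrhsk\ast \mnul$. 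The analogous expansion of the homogenised equation together with \eqref{A_infinite} yields $u_1^\infty = 0$.

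Second, I read off the $\gamma^2$-coefficient of the homogenised flux. By H-convergence, $\mA_\gamma^n \nabla u_\gamma^n \dscon \mA_\gamma^\infty \nabla u_\gamma^\infty$; expanding both sides in $\gamma$ and matching order $\gamma^2$, using $\mA_2^n \povrhsk\ast \mnul$ and cancelling the $\mA_0 \nabla u_2^\bullet$ contributions (which pass to the weak limit trivially), one arrives at the distributional identity
\begin{equation*}
\int_\Omega \bigl(\mA_2^\infty \nabla u_0\bigr)(\mx)\,\phi(\mx)\,d\mx = \lim_n \int_\Omega \bigl(\mA_1^n \nabla u_1^n\bigr)(\mx)\,\phi(\mx)\,d\mx
\end{equation*}
for every $\phi \in \Cc\Omega$. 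Choosing $u_0$ with $\nabla u_0 = e_j$ on the support of $\phi$ (e.g.\ $u_0(\mx)=x_j$ suitably truncated) reduces the computation of $(\mA_2^\infty)_{ij}$ to that of the quadratic limit $\lim_n \int_\Omega (\mA_1^n)_{ik}\partial_k u_1^n\,\phi\,d\mx$ in the sequence $(\mA_1^n)$.

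Third, I solve the corrector equation in Fourier variables. Formally, $(\mA_0 \mxi\cdot\mxi)\,\hat u_1^n(\mxi) = i\xi_l\,\widehat{(\mA_1^n)_{lj}}(\mxi)$, whence
\begin{equation*}
\partial_k u_1^n = -\sum_l \lA_{\psi_{kl}}\bigl((\mA_1^n)_{lj}\bigr) + r_n,\qquad \psi_{kl}(\mxi)=\frac{\xi_k\xi_l}{\mA_0\mxi\cdot\mxi},
\end{equation*}
with $r_n$ strongly compact in $\Ldl\Omega$, arising from localisation and the symbol's behaviour near the origin; this remainder contributes nothing to the quadratic limit, which depends only on the symbol's restriction to $\Sdmj$. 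Substituting into the identification and applying the matrix version of Theorem \ref{existence} to $(\mA_1^n)$ yields
\begin{equation*}
\int_\Omega (\mA_2^\infty)_{ij}(\mx)\,\phi(\mx)\,d\mx = -\sum_{k,l}\Dupp{\mu_{11}^{iklj}}{\phi\,\frac{\xi_k\xi_l}{\mA_0\mxi\cdot\mxi}},
\end{equation*}
the minus sign stemming from the Fourier calculation of the gradient.

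The main obstacle, I expect, is the rigorous justification of the asymptotic expansion: controlling remainders uniformly in $n$ and preserving the analyticity in $\gamma$ after passing to the H-limit. This is precisely what the small-amplitude homogenisation framework of \cite[Chapter 29]{Thom} provides. A secondary technical point is verifying the smoothness of the symbol $\psi_{kl}$ on $\Sdmj$ (ensured by the strict ellipticity of $\mA_0 \in \lM(\alpha,\beta;\Omega)$), so that the First commutation lemma applies when localising by $\phi\in\Cc\Omega$.
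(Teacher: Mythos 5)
Your proposal is correct and follows essentially the same route as the paper (which itself only sketches this argument, deferring to Tartar): expansion in $\gamma$, identification of $\mA_2^\infty\nabla u$ as the weak limit of $\mA_1^n\nabla u_1^n$, Fourier inversion of the corrector equation to obtain $\nabla u_1^n=-\lA_\Psi(\mA_1^n\nabla u)$ with $\Psi(\mxi)=\xi_k\xi_l/(\mA_0\mxi\cdot\mxi)$, and application of the H-measure existence theorem to the quadratic limit. The only cosmetic difference is that the paper fixes $u$ and takes the right-hand side $-\dv(\mA_\gamma^\infty\nabla u)$ so that $u_\gamma^n\dscon u$ exactly, whereas you work with a fixed $f$ and then localise $\nabla u_0$; both lead to the same identification.
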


\subsection{Expressing higher order correction terms}
The goal of the following study is to use the approach presented in Section \ref{theory} in order to construct explicit expressions for  higher order homogenised terms. We shall see that such obtained formul\ae\ will include H-measures associated to (the powers of) the lower order perturbations. Here we follow the approach presented firstly in \cite{Tar} for elliptic problems (for the parabolic version see \cite{ALjmaa}). 


Fix a function  $u\in\Hjnl\Omega$, and denote by $u^n_\gamma$ the solution of
\begin{equation}
\label{eq-gamma}
-\dv(\mA^n_\gamma\nabla u^n_\gamma) = - \dv(\mA^\infty_\gamma\nabla u).
\end{equation}
Because of H-convergence, we have that $ u^n_\gamma \dscon  u$ (in $\Hjnl \Omega$)
and that $\vD^n_\gamma:=\mA^n_\gamma\nabla u^n_\gamma  \dscon \mA^\infty_\gamma\nabla u$ (in $\Ld \Omega$).
After writing the expansions in powers of $\gamma$:
$$
u^n_\gamma=u^n_0+\gamma u^n_1 + \gamma^2u^n_2 + o(\gamma^2)\;, \quad
\quad	\vD^n_\gamma=\vD^n_0+\gamma \vD^n_1 + \gamma^2\vD^n_2 + o(\gamma^2) \;,
$$
we see that $u^n_0 \dscon u$ and $u^n_i \dscon 0$ for $i\geq1$, while it remains to calculate the limits of $\vD^n_i$. 

 Equating the terms with equal powers of $\gamma$ we get $\nabla u^n_0=\nabla u$, $\vD^n_0=\mA_0\nabla u$ (with $\gamma^0$);
and $\vD^n_1=\mA_0\nabla u^n_1+\mA_1^n\nabla u \dscon\vnul$ (with $\gamma^1$).
On the other hand, $\vD^n_1$ converges to $\mA_1^\infty\nabla u$ (the term in expansion of $\mA^\infty_\gamma \nabla u$ with power of $\gamma$ equal 1),
 which, by varying $u$,  gives that $\mA_1^\infty=\mnul$.

For the quadratic term we have:
\begin{equation}
\label{A2}
\vD^n_2=\mA_0\nabla u^n_2 + \mA_1^n\nabla u^n_1 + \mA_2^n\nabla u \dscon \lim_n \mA_1^n\nabla u^n_1 = \mA_2^\infty \nabla u\;,
\end{equation}
By expansion of \eqref{eq-gamma} and taking terms with $\gamma^1$, observe that $u^n_1$ satisfies the problem \eqref{e-problems} with $\mA^n=\mA_0$ and the right hand side equal to $\dv (\mA_1^n \nabla u)$. As $\mA_0$ is a constant matrix, application of the Fourier transform yields 
\begin{equation*}
\widehat{\nabla u_1^n} ( \mxi)
= - { \left(\mxi \otimes \mxi\right) \widehat{\left( \mA_1^n \nabla u\right)} ( \mxi) 
	\over\mA_0\mxi\cdot\mxi},
\end{equation*}
or equivalently
\begin{equation}
\label{E1}
\nabla u_1^n ( \mx)
= - \lA_\Psi \left( \mA_1^n \nabla u\right) ( \mx) ,
\end{equation}
where $\lA_\mpsi $ is the multiplier with the symbol $\Psi(\mxi)={\mxi \otimes \mxi	\over\mA_0\mxi\cdot\mxi}$.

Taking into account \eqref{A2}, one gets that the second order correction $\mA_2$ is expressed via limit of the quadratic term in $\mA_1^n$, i.e. via a corresponding H-measure, which is essentially the  result of  Theorem \ref{A2-theorem}.

In order to find the cubic term, note that
\begin{equation}
\label{A3}
\vD^n_3=\mA_0\nabla u^n_3 + \mA_1^n\nabla u^n_2 + \mA_2^n\nabla u^n_1  + \mA_3^n\nabla u \dscon \lim_n (\mA_1^n\nabla u^n_2+\mA_2^n\nabla u^n_1 ) = \mA_3^\infty \nabla u\;,
\end{equation}
where we have taken into account  zero weak convergence of $(\mA_i^n)$ and $(u_i^n)$ for $i\geq1$. 

On the other hand, by equating the terms  with $\gamma^2$ in the expansion    of the relation \eqref{eq-gamma}, one gets for $u^n_2$
\begin{equation*}
-\dv(\mA_0\nabla u^n_2) =\dv(\mA_2^n\nabla u +  \mA_1^n\nabla u^n_1- \mA_2^\infty\nabla u) \dscon 0,
\end{equation*}
where the convergence of the right hand side follows from \eqref{A2}. 

By applying the Fourier transform, similarly as in  \eqref{E1} one gets
$$
\nabla u_2^n ( \mx)
= - \lA_\Psi \left( \mA_2^n\nabla u +  \mA_1^n\nabla u^n_1- \mA_2^\infty\nabla u\right) ( \mx).
$$
Putting the last expression together with \eqref{E1} in \eqref{A3}, integration and multiplication by a test function $\ph\in\Cbc\Omega$ yields
\begin{equation}
\label{A3-int}
\int_\Omega \ph \mA_3^\infty \nabla u\,  d\mx 
=\lim_n\int_\Omega \ph \Big( - \mA_1^n \lA_\Psi \mA_2^n\nabla u - \mA_2^n\lA_\Psi \mA_1^n\nabla u 
+\mA_1^n\lA_\Psi \left( \mA_1^n \lA_\Psi \mA_1^n\nabla u\right)  \Big)d\mx 
\end{equation}
The limits of the first two terms on the right hand side are expressed via H-measures determined by functions $\mA_1^n$ and $\mA_2^n$.
The last term in \eqref{A3-int} is a cubic term in $\mA_1^n$. As applications of H-measures so far have been constrained to quadratic expressions, for that reason no expression for the third order correction has been obtained.   In the sequel we shall overcome the restriction and calculate its limit  by means of Theorem \ref{general}.

We analyse the quadratic terms first. Carefully expanding the product by components we obtain
$$
\lim_n\int_\Omega\ph \Big(   \mA_1^n \lA_\Psi \mA_2^n + \mA_2^n\lA_\Psi \mA_1^n\Big)\nabla u  d\mx =\Dup{2\Re \mmu_{12}}{\ph { \mxi \otimes \mxi  \otimes\nabla u
	\over  \mA_0\mxi\cdot\mxi}}\,,
$$
where $\mmu_{12}$ is an off-diagonal block term of the H-measure associated to $(\mA_1^n,  \mA_2^n)$ --  a measure with four indices (the first of them not being contracted above). 

As for the cubic term, observe that by using an pseudodifferential calculus identity $\int \left(\lA_\psi u\right) v\, dx= \int u \,\lA_{\tilde\psi}  v \,dx$  (where $\tilde \psi$ denotes the change of the argument sign, i.e. $\tilde\psi(\xi)=\psi(- \xi)$) it can be rewritten as 
\begin{equation}
\label{cubic}
\lim_n\int_\Omega  \left(\lA_\Psi\left(\ph \mA_1^n\right)^\top\right)^\top  \mA_1^n \lA_\Psi \mA_1^n\nabla u d\mx \,,
\end{equation}
where symmetry and evenness of $\Psi$ has been taken into account. 
Denoting 
\begin{equation}
\label{VW}
 \mV^n= \left(\lA_\Psi\left(\ph \mA_1^n\right)^\top\right)^\top, \quad \mW^n= \mA_1^n \lA_\Psi \mA_1^n,
\end{equation}
Theorem \ref{general} gives that the $i$-th component  of \eqref{cubic} equals $\sum_{j,k} \Dup{\mu_{v^{ik}w^{kj}}}{\ph\partial_j u\otimes1}$ ($\mu_{v^{ik}w^{kj}}$ denotes appropriate component of the four-index H-measure $\mmu_{VW}$ associated to $(\mV^n, \overline \mW^n)$).  Note that we omit the last term appearing in \eqref{result}, as the (zero) weak convergence assumption on coefficients $\mA_1^n$ implies the  weak limit of $\mV^n$ equals zero.
By varying function $u\in\Cj \Omega$ (e.g.~choosing $\nabla u$ constant on $ \omega$, where $\omega\Subset\Omega$) we finally obtain the following result.
\begin{theorem}
\label{high-result}
The third order correction of the effective diffusion tensor $\mA_\gamma^\infty$ defined in \eqref{A_infinite} is given by
\begin{equation}
\label{result-A3}
\int_\Omega \left(\mA_3^\infty \right)^{ij}\ph \,  d\mx 
= - \Dup{2\Re \mmu_{12}^{ij}}{\ph { \mxi \otimes \mxi  
	\over  \mA_0\mxi\cdot\mxi}}
+\Dup{\tr \mmu_{VW}^{ij}}{\ph  \boxtimes 1}\,,
\end{equation}
where $\mmu_{12}^{ij}$ denotes the matrix H-measure  determined by  sequences $(\mA_1^n)$  and $(\mA_2^n)$ with components $\left(\mmu_{12}^{ij}\right)^{kl}=\mu_{12}^{iklj}$,  and similarly for the H-measure $\mmu_{VW}^{ij}$, whose generating sequences are given by \eqref{VW}. 

\end{theorem}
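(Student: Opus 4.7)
The plan is to follow the strategy already outlined in the paragraphs preceding the theorem, namely to identify $\mA_3^\infty\nabla u$ as the weak limit in \eqref{A3}, express $\nabla u_1^n$ and $\nabla u_2^n$ through the constant-coefficient elliptic solver via Fourier multipliers, and then apply standard H-measure analysis to the quadratic pieces together with Theorem \ref{general} to the cubic one. Concretely, I would first equate the $\gamma^2$-terms in the expansion of \eqref{eq-gamma} to obtain the equation for $u_2^n$, and invert $-\dv(\mA_0\nabla\cdot)$ via the Fourier transform to get
$$\nabla u_2^n(\mx) = -\lA_\Psi\bigl(\mA_2^n\nabla u + \mA_1^n\nabla u_1^n - \mA_2^\infty\nabla u\bigr)(\mx),$$
with $\Psi(\mxi)=\mxi\otimes\mxi/(\mA_0\mxi\cdot\mxi)$ as in \eqref{E1}. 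Substituting this expression together with \eqref{E1} into the limit on the right-hand side of \eqref{A3}, testing against $\ph\in\Cbc\Omega$, and noting that $\mA_2^\infty\nabla u$ gives a vanishing contribution against the weakly null $(\mA_1^n)$, yields \eqref{A3-int}.

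Next I would treat the two quadratic terms in $\mA_1^n \lA_\Psi \mA_2^n\nabla u$ and $\mA_2^n\lA_\Psi \mA_1^n\nabla u$ by the standard Tartar identity: replacing $\lA_\Psi$ by its symbol against the off-diagonal block $\mmu_{12}$ of the matrix H-measure associated to $(\mA_1^n,\mA_2^n)$ (after applying the first commutation lemma to commute $\ph$ past $\lA_\Psi$) produces $-\Dup{2\Re\mmu_{12}^{ij}}{\ph\,\mxi\otimes\mxi/(\mA_0\mxi\cdot\mxi)}$, matching the first term in \eqref{result-A3}.

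For the cubic term, the key step is to move one of the $\lA_\Psi$ operators to the other side of the integral using the formal self-adjointness identity $\int(\lA_\psi u)v = \int u\,\lA_{\tilde\psi}v$, using symmetry and evenness of $\Psi$, which brings the integrand into the form \eqref{cubic}. Then with the definitions \eqref{VW} of $\mV^n$ and $\mW^n$, the expression becomes an integral of a product of three factors each of the form ``multiplier applied to $\mA_1^n$-times-something'', fitting the template \eqref{p-product} (with $p=3$) after a routine identification of indices. Applying Theorem \ref{general} produces $\sum_{j,k}\Dup{\mu_{v^{ik}w^{kj}}}{\ph\,\partial_j u\otimes 1}$, and, crucially, the correction term $\int \ph v\bar w\,d\mx$ vanishes because $\mV^n\dscon\vnul$ (being the image of the weakly null $\mA_1^n$ under a bounded multiplier).

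The main obstacle I anticipate is the cubic step: verifying that the hypotheses of Theorem \ref{general} apply (uniform $\LL{p+\eps}$ boundedness and smoothness of symbols), that the commutator-type identity used to produce \eqref{cubic} is justified on $\LL p$ for $p>2$ rather than just on $\LLd$ (this is exactly what Lemma \ref{commutation} buys us), and that the weak limit $v$ of $\mV^n$ is indeed zero so that no Lebesgue correction survives. Once this is in place, a final step of varying $u\in\Cj\Omega$ (in particular choosing $\nabla u$ locally constant on a subdomain $\omega\Subset\Omega$) and varying $\ph\in\Cbc\Omega$ separates the indices and yields \eqref{result-A3} entrywise.
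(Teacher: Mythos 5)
Your proposal is correct and follows essentially the same route as the paper: deriving $\nabla u_2^n$ via the Fourier multiplier $\lA_\Psi$, substituting into \eqref{A3} to get \eqref{A3-int}, treating the quadratic terms with the off-diagonal block $\mmu_{12}$, rewriting the cubic term as \eqref{cubic} via the adjoint identity and the evenness of $\Psi$, applying Theorem \ref{general} to $(\mV^n,\mW^n)$ with the Lebesgue correction dropped since $\mV^n\dscon\vnul$, and finally varying $u$ and $\ph$. The obstacles you flag (applicability of Lemma \ref{commutation} in $\LL p$, $p>2$, and the vanishing of the weak limit $v$) are exactly the points the paper relies on.
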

\begin{remark}
Although not given explicitly, the last term in \eqref{result-A3} depends on $\Psi={ \mxi \otimes \mxi  
	\over  \mA_0\mxi\cdot\mxi}$ indeed, as both sequences $(V^n), (W^n)$, as well as the associated H-measure depend on it. This dependence is more apparent if a periodic setting is considered, which is the subject of the next subsection. 
\hfill\endmark
\end{remark}
Similar procedure as in the above can be applied in order to express further order correction terms of the effective diffusion tensor. Derivation for the fourth order term is sketched in the next subsection under periodicity assumption. 

\subsection{Periodic setting}
This subsection is devoted to a special, but important case of periodic coefficients in \eqref{e-problems}. By employing techniques of Section \ref{theory} we derive explicit formul\ae\ for higher order terms expressed by Fourier coefficients of $\mA^n$. 

We suppose the coefficients in \eqref{expansion} to be of the form
$$
\mA^n_i(n \mx)=\mA_i(n \mx)=\sum_{\mk\in\Zd}  \hat\mA_{i,\mk} e^{2\pi i n \mk \cdot \mx}, \quad i\in \N
$$
where (just  to simplify calculations) each $\mA_i$ is symmetric with the zero mean value. 

The measure associated to $(\mA_1^n,  \mA_2^n)$ takes the form
$$
\mmu_{12}=\sum_{\mk} \hat\mA_{1,\mk} \otimes \hat\mA_{2,-\mk} \delta_{\mk\over|\mk| }(\mxi) \lambda(\mx)\,.
$$
Thus the first term on the right hand side of \eqref{result-A3} equals
$$
- 2\sum_\mk  {1 \over \mA_0 \mk\cdot\mk} \int_\Omega  (\hat\mA_{1,\mk}  \mk) \otimes  (\hat\mA_{2, -\mk}  \mk ) \, \ph(\mx) d\mx\,.
$$
As for the second term one uses the formula \eqref{H-cubic} for an H-measure associated to cubic terms (in $\mA_1^n$ here). Thus the trace of $\mmu_{VW}$ (the trace being taken with respect to the middle two components) equals 
$$
\tr \mmu_{VW} =
\sum_{\begin{matrix}  \scriptstyle \mk,  \ml, \mm \\  
\scriptstyle \mk+\ml+\mm=\vnul\end{matrix}}
 {\hat\mA_{1,\ml}  \mk \cdot\mm \over (\mA_0 \mk\cdot\mk) (\mA_0 \mm\cdot\mm)} 
(\hat\mA_{1,\mk}  \mk) \otimes  (\hat\mA_{1, \mm}  \mm )\delta_{\mk\over|\mk| }(\mxi) \lambda(\mx),
$$
and for the third order correction we obtain the explicit expression
$$
\mA_3^\infty
=\sum_{\mk\in\Zd} {1 \over \mA_0 \mk\cdot\mk} (\hat\mA_{1,\mk}  \mk) \otimes 
\Big(- 2 \hat\mA_{2, -\mk}  \mk  + \sum_{\begin{matrix}  \scriptstyle   \ml, \mm \in\Zd\\  
\scriptstyle \mk+\ml+\mm=\vnul\end{matrix}}{\hat\mA_{1,\ml}  \mk \cdot\mm \over  \mA_0 \mm\cdot\mm} \hat\mA_{1, \mm}  \mm \Big).
$$
Just for comparison, here we provide an analogous expression for $\mA_2^\infty$ obtained by means of Theorem \ref{A2-theorem}:
$$
\mA_2^\infty
=- \sum_{\mk\in\Zd} {1 \over \mA_0 \mk\cdot\mk} (\hat\mA_{1,\mk}  \mk) \otimes 
\hat\mA_{1, -\mk}  \mk \,.
$$
By virtue of Theorem \ref{general-periodic} derivation of expressions for higher order terms goes similarly, just involving more lengthy computations of linear algebra.  Here we briefly provide the result for $\mA_4^\infty$. 

Similarly as \eqref{A3-int} one obtains
\begin{align*}
\int_\Omega \mA_4^\infty \nabla u d\mx&=
\lim_n \int_\Omega \Big(- \mA_3^n \lA_\Psi \mA_1^n\nabla u - \mA_2^n \lA_\Psi \mA_2^n\nabla u
-\mA_1^n \lA_\Psi \mA_3^n\nabla u \\
&+\left(\lA_\Psi \mA_1^n\right)^\top  \mA_1^n \lA_\Psi \mA_2^n\nabla u
+\left(\lA_\Psi \mA_1^n\right)^\top  \mA_2^n \lA_\Psi \mA_1^n\nabla u\\
&+\left(\lA_\Psi \mA_2^n\right)^\top  \mA_1^n \lA_\Psi \mA_1^n\nabla u
- \left(\lA_\Psi \mW^n\right)^\top \mW^n \nabla u\Big) d\mx\,
\end{align*}
where $\mW^n$ is given by \eqref{VW}.  

The first three terms on the right hand side are quadratic terms whose limit is expressed via standard H-measures. The next three are cubic ones, and the limit is obtained by virtue of relation \eqref{result-periodic3}, similarly as it was done above for $\mA^\infty_3$. The last one is a quartic term in $\mA_1^n$ (as $\mW^n$ are quadratic expressions of $\mA_1^n$), which is treated by means of formula \eqref{result-periodic4}. Performing some tedious, but mostly elementary computations one gets

\begin{align*}
\mA_4^\infty
&=\sum_{\mk\in\Zd}  {1 \over \mA_0 \mk\cdot\mk}  \Bigg(
-2 (\hat\mA_{1,\mk}  \mk) \otimes 
  \hat\mA_{3, -\mk}  \mk  - (\hat\mA_{2,\mk}  \mk) \otimes 
 (\hat\mA_2, -\mk)\\
&+ \sum_{\begin{matrix}  \scriptstyle   \ml, \mm \in\Zd\\  
\scriptstyle \mk+\ml+\mm=\vnul\end{matrix}}
{1 \over  \mA_0 \mm\cdot\mm}
\bigg(\hat\mA_{1,\ml}  \mk \cdot\mm \,
(\hat\mA_{1,\mk}  \mk) \otimes  (\hat\mA_{2, \mm}  \mm )
+\hat\mA_{2,\ml}  \mk \cdot\mm \,
(\hat\mA_{1,\mk}  \mk) \otimes  (\hat\mA_{1, \mm}  \mm )\\
&\hskip 4cm
+\hat\mA_{1,\ml}  \mk \cdot\mm \,
(\hat\mA_{2,\mk}  \mk) \otimes  (\hat\mA_{1, \mm}  \mm )\\
&- \sum_{\begin{matrix}  \scriptstyle   \mj\in\Zd\\  
\scriptstyle \ml+\mm=-(\mj+\mk)\end{matrix}}
{1 \over  \mA_0 (\mj+\mk)\cdot(\mj+\mk)}
(\hat\mA_{1,\mj}  \mk +  \hat\mA_{1,\ml}  \mm) \cdot(\mj+\mk)
(\hat\mA_{1,\mk}  \mk) \otimes  (\hat\mA_{1, \mm}  \mm )\bigg)\Bigg).
\end{align*}

\begin{remark}
  \begin{itemize}
\item   The above results are easily generalised to the case when $\\\mA_i^n \povrhsk\ast\mA_i \not=\mnul$, with correction terms entering  expression for the homogenised limit $\mA^\infty_\gamma$.
\item  In analysis of small amplitude homogenisation  we considered diffusion coefficients $\mA_n$ to be perturbations of a constant. This was important in order to explicitly express $\nabla u_i^n$ via a given function $u$ (relation \eqref{E1} etc). However, one can try to generalise result to a variable $\mA_0$ by justifying and following suggestions given in  \cite{AV} or \cite{Tar}, partially based on the localisation principle for H-measures. Here it is important that the measures analysed in previous section obey the same principle in accordance to subsection \ref{local}, making potential generalisation feasible. 
  \end{itemize}
\end{remark}

\section{Conclusion}

In the paper we have developed a method for expressing limits of non-quadratic terms by means of original H-measures, followed by application to  the small amplitude homogenisation problem for a stationary diffusion equation.
The method provides higher order correction terms of the effective diffusion tensor expressed by H-measures associated to non-quadratic terms in a general, non-periodic setting. Explicit formul\ae\  in terms of Fourier coefficients are obtained under periodicity assumption.

A similar approach can be also conducted in the case of non-stationary diffusion problems by means of parabolic H-measures \cite{ALjmaa,ALjfa}. The latter represent a generalisation of the tool  that  takes into account the difference between the time and space variables, intrinsic to parabolic type problems. 

The corresponding analysis, as well as the explicit formula for the second order homogenised term has been provided firstly by \cite{ALjmaa} for the periodic functions, while non-periodic generalisations   are given in \cite{AV}. 
Results and the approach presented in Section \ref{theory} of expressing  non-quadratic  limits via microlocal defect measures  extend easily to parabolic H-measures  as well. Thus the third order homogenised terms for a non-stationary diffusion problem can  be easily obtained by slightly adapting the proof of Theorem \ref{high-result}, and similarly for higher orders. 

The  presented method for expressing non-quadratic terms can also be ge\-neralised to recently introduced multiscale H-measures \cite{Tmul, AEL}. Unlike the original ones, multiscale variant are capable of distinguishing sequences with different frequencies. Thus, their application in expressing higher order terms in small amplitude homogenisation makes full  sense in the case of coefficients $\mA_i^n$ oscillating on various scales.

The method developed in this paper paves the path for more precise analysis of other problems involving small amplitude homogenisation \cite{AG, AK,GM} in which the analysis so far has been conducted  by means of H-measures
up to the second order expansion. It is important to notice that in all these papers sequences describing the limit terms posses higher regularity than the one required by the original definition of H-measures ($\LLb$ instead of merely $\LLd$ one) and fit within the setting of   the presented approach,  thus making its potential applications feasible.

\section*{Acknowledgements}
 This  work was  supported in part by the Croatian Science Foundation under Grant 9780 WeConMApp.

\section*{Disclosure statement}
The author declares that he has no conflict of interest.

\end{document}